\newtheorem{theorem}{Theorem}[section]
\newtheorem{proposition}[theorem]{Proposition}
\begin{document}

\title[Onepoint discontinuity set of separately continuous functions]{Onepoint discontinuity set of separately continuous functions on the product of two compact spaces}

\author{V.V.Mykhaylyuk}
\address{Department of Mathematics\\
Chernivtsi National University\\ str. Kotsjubyn'skogo 2,
Chernivtsi, 58012 Ukraine}
\email{vmykhaylyuk@ukr.net}

\subjclass[2000]{Primary 54C30, 54D30; secondary 54E52}


\commby{Ronald A. Fintushel}


\keywords{separately continuous functions, dependence functions on $\aleph$ coordinates, compact spaces, Eberlein compact, Valdivia compact }

\begin{abstract}
It is investigated the existence of a separately continuous function $f:X\times Y\to
\mathbb R$ with an onepoint set of discontinuity for topological spaces $X$ and $Y$
which satisfy compactness type conditions. In particular, it is shown that for compact
spaces $X$ and $Y$ and nonizolated points $x_0\in X$ and $y_0\in Y$ there exists a
separately continuous function $f:X\times Y\to \mathbb R$ with the set $\{(x_0,y_0)\}$
of discontinuity points if and only if there exist sequences of nonempty functional
open sets which converge to $x_0$ and $y_0$ in $X$ and $Y$ respectively.
\end{abstract}

\maketitle
\section{Introduction}

It follows from Namioka Theorem [1] that the discontinuity points set of separately continuous function defined on the product of two compact spaces contains in the product of two meager sets. The question on a characterization of discontinuity points set of separately continuous functions defined on the product of two compacts naturally arises (see [2]). In particular, is it true that every onepoint set with non-isolated projections in the products of compact spaces is a discontinuity points set for some separately continuous function? It was shown in [3, Theorem 9] that this question has the negative answer. Namely, it was proved in [3] that does not exist separately continuous function defined on the product of two Tikhonoff cubes at least one of which has uncountable weight with onepoint set of discontinuity points. This result was generalized in [4].

Conditions on topological spaces $X$ and $Y$ and on points $a\in X$ and $b\in Y$ under which there exists a separately continuous function $f:X\times Y\to \mathbb R$ with the set $\{(a,b)\}$ of discontinuity points were studied in [5] (see also [6]). It was shown in [5] that such separately continuous function exists if the following conditions are valid: $X$ and $Y$ are completely regular; $a$ and $b$ are non-isolated $G_{\delta}$-points; $b$ has a countable base of neighborhoods in $Y$ or $Y$ is locally connected in the point $b$. It was constructed in [7] a separately continuous function, defined on the product of two Eberlein compacts, with the given set of discontinuity points. It follows from [7] that similar statement is true if $X$ and $Y$ are Eberlein compacts and $a\in X$ and $b\in Y$ are non-isolated points. Price-Symon property of Eberlein compacts (see [8, p.170]) is a main technical tool in the proof of results from [7]. Note that Tikhonoff cube with uncountable weight has not Price-Symon property. Therefore the following question naturally arises: is the existence of convergent sequences of open sets in compact spaces $X$ and $Y$ necessary for the existence of separately continuous function $f:X\times Y\to
\mathbb R$ with onepoint discontinuity set.

In the paper we study this question. Namely, for some classes of topological spaces $X$ and $Y$ we show that the following conditions are equivalent:

$(i)$ there exist sequences $(U_n)^{\infty}_{n=1}$ and $(V_n)^{\infty}_{n=1}$ of nonempty functionally open sets $U_n\subseteq X$ and $V_n\subseteq Y$, which converge to $x_0\in X$ and $y_0\in Y$ respectively, besides $x_0\not \in U_n$ and $y_0\not \in V_n$ for every $n\in \mathbb N$;

$(ii)$ there exists a separately continuous function $f:X\times Y\to \mathbb R$ for which set of discontinuity points set equals to $\{(x_0,y_0)\}$.

Firstly, using the dependence of function of some coordinates we prove this proposition if $X$ is a separable pseudocompact space and $Y$ is a compact, or $X$ is a pseudocompact with the countable Suslin number and $Y$ is a Valdivia compact. Further we obtain an analogical result for compact spaces $X$ and $Y$.

\section{Notions and auxiliary statements}

The set of discontinuity points for mapping $f:X\to Y$ defined on a topological space $X$ and valued in a topological space $Y$ we denote by $D(f)$. Moreover, if $Y$ is a metric space with a metric $d$ and $A\subseteq X$ is a nonempty set, then the real
$$\omega_f(A)=\sup\limits_{x',x''\in A}d(f(x'),f(x''))$$ is called by {\it the oscillation of $f$ on $A$}, and the real
$$\omega_f(x_0)=\inf\limits_{U\in {\mathcal U}}\omega_f(U),$$, where ${\mathcal U}$ is the system of all neighborhoods of $x_0\in X$, is called by {\it the oscillation of $f$ at $x_0$}.

For a function $f:X\to \mathbb R$ by ${\rm supp}\,f$ we denote the support $\{x\in X:f(x)\ne 0\}$ of $f$.

A set $A$ in a topological space $X$ is called {\it functionally open}, if there exists a continuous function $f:X\to [0,1]$ with $A=f^{-1}((0,1])$.

We say that a sequence $(A_n)_{n=1}^\infty$ of nonempty subsets $A_n$ of topological space $X$ {\it converges to a point $x_0\in X$} (denote by $A_n\to x_0$), if for every neighborhood $U$ of $x_0$ in $X$ there exists an integed $n_0$ such that $A_n\subseteq U$ foe all $n\ge n_0$.

A family $(A_i: i\in I)$ of subsets $A_i$ of a topological space $X$ is called {\it locally finite}, if for every $x\in X$ there exists a neighborhood $U$ of $x$ in $X$ such that the set $\{i\in I: U\cap A_i\ne \O\}$ is finite, and {\it pointwise finite}, if for every $x\in X$ the set $\{i\in I: x\in A_i\}$ is finite.

Completely regular space $X$ is called {\it pseudocompact}, if every continuous function $f:X\to \mathbb R$ is bounded, and {\it Lindeloff}, if for every open cover of $X$ there exists an at most countable subcover.

For infinite cardinal $\aleph$ we say that a topological space $X$ has $(II_{\aleph})$, if $|I|\leq \aleph$ for every pointwise finite family $(A_i: i\in I)$ of open in $X$ nonenpty sets $A_i$. This property was denoted by $(II_{\aleph})$ in [9], where it was used for investigation of dependence on some coordinates of separately continuous functions defined on the product of two space-products.

A topological space $X$ has {\it countable Suslin number}, if every system of nonempty pairwise disjoint  open in $X$ has at most countable cardinality.

\begin{proposition}\label{p:2.1}
A Baire space $X$ has $(II_{\aleph_0})$ if and only if $X$ has countable Suslin number.
\end{proposition}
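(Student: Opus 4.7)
The forward direction is immediate and does not use the Baire property: a pairwise disjoint family of nonempty open sets is automatically pointwise finite (in fact pointwise of multiplicity at most $1$), so $(II_{\aleph_0})$ forces every such family to be at most countable. The content is in the reverse direction, and my plan is to combine the Baire property with an induction on the pointwise multiplicity.

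So suppose $X$ is Baire with countable Suslin number and $(A_i:i\in I)$ is a pointwise finite family of nonempty open sets. For each $n\in\mathbb N$ set
\[
B_n=\{x\in X : |\{i\in I : x\in A_i\}|\le n\}.
\]
The complement of $B_n$ is the union of the open sets $A_{i_1}\cap\cdots\cap A_{i_{n+1}}$ over distinct indices, so $B_n$ is closed. By pointwise finiteness $X=\bigcup_n B_n$, and since $X$ is Baire the union $G=\bigcup_n\mathrm{int}(B_n)$ is dense open in $X$. Every nonempty open $A_i$ meets $G$, hence meets some $U_n:=\mathrm{int}(B_n)$, so writing $I_n=\{i\in I:A_i\cap U_n\neq\emptyset\}$ we have $I=\bigcup_n I_n$. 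Since open subspaces inherit the countable Suslin number, it suffices to prove the following sublemma: \emph{in a space $Y$ with countable Suslin number, every family of nonempty open sets with pointwise multiplicity at most $n$ is at most countable.}

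I would prove the sublemma by induction on $n$. For $n=1$ the family is pairwise disjoint, so the conclusion is the definition of countable Suslin number. For the inductive step, let $(C_i:i\in I)$ be a family in $Y$ of pointwise multiplicity at most $n$, and use Zorn's lemma to choose a maximal subfamily $(C_j:j\in J)$ consisting of pairwise disjoint sets; then $|J|\le\aleph_0$, and by maximality every $C_i$ meets some $C_j$ with $j\in J$. Fix such a $j$; inside the open subspace $C_j$, the sets $C_i\cap C_j$ with $i\neq j$ and $C_i\cap C_j\neq\emptyset$ form a family of nonempty open sets, and each point of $C_j$ already uses one of its at most $n$ slots for $C_j$ itself, so the pointwise multiplicity of this restricted family is at most $n-1$. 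The inductive hypothesis applied to $C_j$ (which again has countable Suslin number) gives that only countably many indices $i$ contribute, and summing over the countable set $J$ gives $|I|\le\aleph_0$.

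The only step that is not essentially bookkeeping is the reduction from pointwise finite to uniformly bounded multiplicity, and that is exactly where the Baire hypothesis is used; after that, the induction with a maximal disjoint subfamily is routine. This is the one place the argument could fail in a non-Baire space, which explains the hypothesis.
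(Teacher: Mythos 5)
Your proof is correct, but it finishes differently from the paper's. The paper also uses the Baire property plus a maximal pairwise disjoint subfamily, but its key step is sharper: it asserts that for each $i$ there is a nonempty open $V_i\subseteq U_i$ such that $V_i$ meets only \emph{finitely many} members $U_j$ of the family (a local-finiteness refinement), after which a single maximal disjoint subfamily $(V_j)_{j\in J}$ gives $|I|\le\aleph_0\cdot|J|\le\aleph_0$ in one stroke. You instead extract from Baireness only the weaker conclusion that the multiplicity function is bounded by $n$ on a dense open piece $\mathrm{int}(B_n)$, and then compensate with an induction on the multiplicity bound, using a maximal disjoint subfamily at each level and passing to the open subspaces $C_j$. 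The trade-off: the paper's route is shorter, but the existence of the sets $V_i$ is stated without justification and itself requires an argument (one must pick a point of locally maximal multiplicity inside $\mathrm{int}(B_n)\cap U_i$ and intersect the finitely many sets containing it); your route avoids that refinement entirely and is fully self-contained, at the cost of the extra induction. Both arguments are sound, and both correctly isolate the Baire hypothesis as the step converting pointwise finiteness into something uniform on a dense open set.
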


\begin{proof} The necessity is obvius. We prove the sufficiency. Let $X$ has countable Suslin number and $(U_i:i\in I)$ is a pointwise finite family of open in $X$ nonempty sets $U_i$, besides $I$ is infinite. Since $X$ is Baire, for every $i\in I$ there exists an open in $X$ nonempty set $V_i\subseteq U_i$ such that the set $\{j\in I: U_j\cap V_i\ne \O \}$ is finite. Then the set $\{j\in I: V_j\cap V_i\ne \O\}$ is finite for every $i\in I$. Using a maximal set $J\subseteq I$ such that the family $(V_j: j\in J)$ consists of pairwise disjoint sets, we obtain $|I|\leq \aleph_0 \cdot |J|\leq \aleph^2_0 = \aleph_0$. Thus, $X$ has $(II_{\aleph_0})$.
\end{proof}

A continuous mapping $f:X\to Y$ defined on topological space $X$ and valued in topological space $Y$ is called {\it perfect}, if $f$ is closed that is the set $B=f(A)=\{f(a):a\in A\}$ is closed in $Y$ for every closed in $X$ set $A$, and the set $f^{-1}(y) = \{x\in X: f(x)=y\}$ is compact in $X$ for every $y\in Y$.

\begin{proposition}\label{p:2.2} Let $X$, $Y$ be topological spaces, $\varphi:Y\to Y_0$ be a perfect surjection, $f_0:X\times Y_0\to \mathbb R$ and $f:X\times Y\to \mathbb R$ such that $f(x,y)=f_0(x,\varphi(y))$ for every $x\in X$ and $y\in Y$. Then $D(f_0)=D$, where $D=\{(x,\varphi(y)):(x,y)\in D(f)\}$.
\end{proposition}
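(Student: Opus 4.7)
The plan is to prove the two inclusions $D\subseteq D(f_0)$ and $D(f_0)\subseteq D$ separately. The first is essentially formal and uses only that $\varphi$ is continuous; the second is where the perfectness hypothesis does the work.

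For the inclusion $D\subseteq D(f_0)$, I would argue by contrapositive. Assume $(x_0,z_0)\notin D(f_0)$ and fix any $y\in\varphi^{-1}(z_0)$. Since $f = f_0\circ(\mathrm{id}_X\times\varphi)$ and $\mathrm{id}_X\times\varphi\colon X\times Y\to X\times Y_0$ is continuous (sending $(x_0,y)$ to $(x_0,z_0)$), continuity of $f_0$ at $(x_0,z_0)$ yields continuity of $f$ at $(x_0,y)$. Hence $(x_0,y)\notin D(f)$, so $(x_0,z_0)=(x_0,\varphi(y))\notin D$.

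For the inclusion $D(f_0)\subseteq D$, I again use the contrapositive: suppose $(x_0,z_0)\notin D$, which means that for every $y\in\varphi^{-1}(z_0)$ the function $f$ is continuous at $(x_0,y)$. I must deduce that $f_0$ is continuous at $(x_0,z_0)$. Note first that on the fiber $\{x_0\}\times\varphi^{-1}(z_0)$ the function $f$ is constant with value $f_0(x_0,z_0)$. Fix $\varepsilon>0$. For each $y\in\varphi^{-1}(z_0)$ continuity of $f$ at $(x_0,y)$ provides open neighborhoods $U_y\ni x_0$ and $V_y\ni y$ with $\omega_f(U_y\times V_y)<\varepsilon$. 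Using that $\varphi$ is perfect, the fiber $\varphi^{-1}(z_0)$ is compact, so finitely many $V_{y_1},\dots,V_{y_n}$ suffice to cover it. Put $U=\bigcap_{i=1}^n U_{y_i}$ and $V=\bigcup_{i=1}^n V_{y_i}$; then $U$ is an open neighborhood of $x_0$ in $X$, and $\varphi^{-1}(z_0)\subseteq V$.

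The key application of perfectness is now to convert the neighborhood $V$ of the fiber into a neighborhood of $z_0$ in $Y_0$: since $\varphi$ is closed, $\varphi(Y\setminus V)$ is closed in $Y_0$ and does not contain $z_0$, so $W:=Y_0\setminus\varphi(Y\setminus V)$ is an open neighborhood of $z_0$ with $\varphi^{-1}(W)\subseteq V$. For any $(x,z)\in U\times W$, picking any $y\in\varphi^{-1}(z)$ gives $y\in V_{y_i}$ for some $i$, whence $|f_0(x,z)-f_0(x_0,z_0)|=|f(x,y)-f(x_0,y_i)|<\varepsilon$. Thus $\omega_{f_0}(U\times W)\le 2\varepsilon$, and since $\varepsilon$ was arbitrary, $f_0$ is continuous at $(x_0,z_0)$, i.e.\ $(x_0,z_0)\notin D(f_0)$. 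The main (and only non-formal) obstacle is this compactness-plus-closedness manoeuvre; once the fiber is covered by finitely many $V_{y_i}$, the closedness of $\varphi$ is exactly what is needed to push the cover down to a neighborhood in $Y_0$.
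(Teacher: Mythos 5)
Your proposal is correct and follows essentially the same route as the paper: the easy inclusion via continuity of $f_0\circ(\mathrm{id}_X\times\varphi)$, and the reverse inclusion by building a tube $U\times V$ around the compact fiber $\varphi^{-1}(z_0)$ and then using closedness of $\varphi$ to shrink $V$ to a saturated neighborhood $\varphi^{-1}(W)$ of the fiber. The only difference is that you spell out the finite-subcover step that the paper leaves implicit.
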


\begin{proof} Let $(x,y)\in D(f)$ and $y_0=\varphi(y)$. It is clear that $(x,y_0)\in D(f_0)$. Thus, $D\subseteq D(f_0)$.

Let $(x_0,y_0)\not \in D$. We put $K=\varphi^{-1}(y_0)$. Since $\varphi$ is perfect, $K$ is a compact set in $Y$. Note that $f$ is continuous at every point $(x_0,y)$, where $y\in K$, besides $f(x_0,y)=f_0(x_0,y_0)$. Therefore for every $\varepsilon >0$ there exists an open neighborhood $U$ of $x_0$ in $X$ and an open set $G$ in $Y$ such that $K\subseteq G$ and $|f(x,y)-f_0(x_0,y_0)|<\varepsilon$ for every $x\in U$ and $y\in G$. The set $Y\setminus G$ is closed in $Y$ and $\varphi$ is perfect, therefore the set $F=\varphi (Y\setminus G)$ is closed in $Y_0$, besides $y_0\not \in F$. Put $V_0=Y_0\setminus F$. Clearly that $V_0$ is an neighborhood of $y_0$ and $\varphi^{-1}(V_0)\subseteq G$. Let $x\in U$ and $y'\in V_0$. We choose $y''\in G$ such that $\varphi(y'')=y'$. Then $|f_0(x,y')-f_0(x_0,y_0)| = |f(x,y'')-f(x_0,y_0)|<\varepsilon$. Thus, $f_0$ is continuous at $(x_0,y_0)$.
Hence, $D(f_0)\subseteq D$.
\end{proof}

The following proposition proves the implication $(i)\Longrightarrow(ii)$.

\begin{proposition}\label{p:2.3} Let $X$, $Y$ be topological spaces, $x_0\in X$, $y_0\in Y$, $(U_n)^{\infty}_{n=1}$ and $(V_n)^{\infty}_{n=1}$ be sequences of nonempty functionally open in $X$ and $Y$ sets $U_n\subseteq X$ and $V_n\subseteq Y$ respectively such that $U_n\to x_0$ and $V_n\to y_0$, besides $x_0\not \in U_n$ and $y_0\not \in V_n$ for every $n\in \mathbb N$. Then there exists a separately continuous function $f:X\times Y \to \mathbb R$ such that $D(f)=\{(x_0,y_0)\}$.
\end{proposition}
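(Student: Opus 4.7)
The plan is to realise $f$ as a pointwise-finite series of tensor-product peaks
\[
f(x,y)=\sum_{n=1}^{\infty} u_{n}(x)\, v_{n}(y),
\]
where each $u_{n}\colon X\to[0,1]$ is continuous, $\{x:u_{n}(x)>0\}=U_{n}$, and $u_{n}(x_{n})=1$ for some chosen $x_{n}\in U_{n}$; the $v_{n}\colon Y\to[0,1]$ are built analogously with witnesses $y_{n}\in V_{n}$ satisfying $v_{n}(y_{n})=1$. Since $U_{n}$ is functionally open, such a $u_{n}$ is produced by fixing continuous $g_{n}\colon X\to[0,1]$ with $U_{n}=g_{n}^{-1}((0,1])$ and setting $u_{n}:=\min\{1,g_{n}/g_{n}(x_{n})\}$; note that automatically $u_{n}(x_{0})=0$ and $v_{n}(y_{0})=0$, because $x_{0}\notin U_{n}$ and $y_{0}\notin V_{n}$.

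The main verification is that this series collapses to a finite sum on a neighbourhood of every point outside $(x_{0},y_{0})$. Given $x\neq x_{0}$, the mild separation of $x$ from $x_{0}$ (implicit in the completely regular setting in which functionally open sets are the natural object) combined with $U_{n}\to x_{0}$ produces an open neighbourhood $W$ of $x$ and an integer $N$ with $W\cap U_{n}=\emptyset$ for all $n\ge N$, so $u_{n}\equiv 0$ on $W$ for $n\ge N$. Consequently, on $W\times Y$ the series reduces to $\sum_{n<N} u_{n}(x')v_{n}(y')$, a finite sum of functions jointly continuous on $X\times Y$. The mirror reduction holds when $y\neq y_{0}$, while on $\{x_{0}\}\times Y\cup X\times\{y_{0}\}$ every individual term is already zero. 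In one stroke this shows that $f$ is well defined, that each single-variable section is identically zero or a finite sum of continuous functions (so $f$ is separately continuous), and that $f$ is jointly continuous at every $(x,y)\neq(x_{0},y_{0})$.

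Finally, $(x_{0},y_{0})\in D(f)$: one has $f(x_{0},y_{0})=0$, whereas $f(x_{n},y_{n})\ge u_{n}(x_{n})v_{n}(y_{n})=1$, and the convergence hypotheses $U_{n}\to x_{0}$, $V_{n}\to y_{0}$ force $(x_{n},y_{n})\to(x_{0},y_{0})$; combined with the continuity established above this yields $D(f)=\{(x_{0},y_{0})\}$. The only slightly delicate step is the extraction of the neighbourhood $W$ in the second paragraph, since it requires being able to separate $x\neq x_{0}$ from $x_{0}$ by an open neighbourhood of $x_{0}$ whose complement is a neighbourhood of $x$---a very mild separation hypothesis implicit throughout the paper via its use of functionally open sets.
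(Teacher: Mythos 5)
Your construction is exactly the paper's: the paper takes continuous $\varphi_n,\psi_n$ with $U_n=\varphi_n^{-1}((0,1])$, $V_n=\psi_n^{-1}((0,1])$, $\sup\varphi_n=\sup\psi_n=1$, sets $f(x,y)=\sum_n\varphi_n(x)\psi_n(y)$, and declares the verification ``easy to see,'' whereas you carry out that verification (local finiteness away from $(x_0,y_0)$, vanishing on the cross, oscillation $\ge 1$ at $(x_0,y_0)$). The separation caveat you flag at the end is genuinely present but is shared by the paper, which in fact invokes complete regularity of $X$ and $Y$ at the point where it applies this proposition; so your argument is correct under the same implicit hypotheses and follows essentially the same route.
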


\begin{proof} Let $\varphi_n:X\to [0,1]$ and $\psi_n:Y\to [0,1]$ be continuous functions such that $U_n=\varphi^{-1}_n((0,1])$, $V_n=\psi^{-1}_n((0,1])$ and $\sup\limits_{x\in X} \varphi_n(x) = \sup\limits_{y\in Y} \psi_n(y)= 1$ for every $n\in \mathbb N$. It easy to see that the function $f:X\times Y \to \mathbb R$, $f(x,y)= \sum\limits^{\infty}_{n=1}\varphi_n(x)\cdot \psi_n(y)$, is required.
\end{proof}

\section{Dependence on $\aleph_0$ coordinates}

Let $Z$, $T$ be sets, $Y\subseteq {\mathbb R}^T$ and $f:Y\to Z$. We say that {\it $f$ concentrated on $S$}, where $S\subseteq T$, if $f(y')=f(y'')$ for every $y', y''\in Y$ with $y'|_S = y''|_S$. Moreover, if the cardinality $|S|$ of $S$ is $\leq\aleph_0$, then we say that {\it $f$ depends on $\aleph_0$ coordinates}.

Moreover, let $X$ be a set and $g:X\times Y\to Z$. Then {\it $g$ concentrated on $S\subseteq T$ with respect to the second variable}, if $f(x,y')=f(x,y'')$
for every $x\in X$ and $y',y''\in Y$ with $y'|_S = y''|_S$, and {\it $g$ depends on $\aleph_0$ coordinates with respect to the second variable}, if $|S|\leq \aleph_0$ for some such set $S$.

\begin{theorem}\label{th:3.1}
Let $X$ be a separable topological space and $Y\subseteq {\mathbb R}^T$ be a Lindeloff space. Then every separately continuous function $f:X\times Y\to \mathbb R$ depends on $\aleph_0$ coordinates with respect to the second variable.
\end{theorem}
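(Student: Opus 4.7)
The plan is to reduce to the one-variable case via a dense subset of $X$, and then use the Lindel\"of property of $Y$ to handle each continuous slice separately.

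First, fix a countable dense subset $D\subseteq X$ (which exists by separability). For each $x\in D$, separate continuity of $f$ means the slice $g_x:Y\to\mathbb R$, $g_x(y)=f(x,y)$, is continuous. The key lemma I would establish is: \emph{every continuous function $g:Y\to\mathbb R$ from a Lindel\"off subspace $Y\subseteq \mathbb R^T$ depends on countably many coordinates}. Granting this, let $S_x\subseteq T$ be a countable set on which $g_x$ is concentrated, and put $S=\bigcup_{x\in D}S_x$, still countable.

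To finish the theorem given the lemma, take $y',y''\in Y$ with $y'|_S=y''|_S$. For each $x\in D$, $y'|_{S_x}=y''|_{S_x}$, so $f(x,y')=g_x(y')=g_x(y'')=f(x,y'')$. By separate continuity, both $x\mapsto f(x,y')$ and $x\mapsto f(x,y'')$ are continuous on $X$ and agree on the dense set $D$; since $\mathbb R$ is Hausdorff, they coincide everywhere. Hence $f$ is concentrated on $S$ with respect to the second variable, and $|S|\le\aleph_0$.

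The real work is the lemma, which I expect to be the main technical obstacle. I would argue as follows. Fix $n\in\mathbb N$. For each $y\in Y$, by continuity of $g$ and the definition of the product topology, there is a finite set $F_{y,n}\subseteq T$ and a basic open neighborhood $W_{y,n}$ of $y$ in $Y$, determined by the coordinates in $F_{y,n}$, such that $\omega_g(W_{y,n})<1/n$. The family $(W_{y,n})_{y\in Y}$ is an open cover of $Y$, so by the Lindel\"off property it admits a countable subcover $(W_{y_k^{(n)},n})_{k\in\mathbb N}$. Let $S_n=\bigcup_k F_{y_k^{(n)},n}$; this is countable, and so is $S^{\ast}=\bigcup_n S_n$. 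Now if $y',y''\in Y$ satisfy $y'|_{S^{\ast}}=y''|_{S^{\ast}}$, then for each $n$ there is some $k$ with $y'\in W_{y_k^{(n)},n}$; since this neighborhood is determined by coordinates in $F_{y_k^{(n)},n}\subseteq S_n\subseteq S^{\ast}$, also $y''\in W_{y_k^{(n)},n}$, whence $|g(y')-g(y'')|<2/n$. Letting $n\to\infty$ yields $g(y')=g(y'')$, as required.

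The potentially delicate point is just making sure that the basic neighborhoods $W_{y,n}$ arising in $Y\subseteq \mathbb R^T$ are indeed determined by finitely many coordinates in the intrinsic topology on $Y$; this is immediate because $Y$ carries the subspace topology, so basic opens in $Y$ are intersections with $Y$ of finite-coordinate cylinders in $\mathbb R^T$, and intersecting with $Y$ preserves the property of being determined by those coordinates. Once this is observed, the remainder is a routine diagonalization over $n$.
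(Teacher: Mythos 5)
Your proof is correct and follows essentially the same route as the paper: fix a countable dense set in $X$, apply the one-variable fact that continuous functions on a Lindel\"of subspace of ${\mathbb R}^T$ depend on countably many coordinates to each slice, take the countable union of coordinate sets, and extend from the dense set by continuity in $x$. The only difference is that the paper simply cites the one-variable lemma as known, whereas you supply its (correct) standard proof via countable subcovers of oscillation covers.
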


\begin{proof} Since $Y$ is Lindeloff, every continuous function $g:Y\to \mathbb R$ depends on $\aleph_0$ coordinates. Therefore for every $x\in X$ there exists an at most countable set $T_x\subseteq T$ such that $f(x,y')=f(x,y'')$ for every $y',y''\in Y$ with $y'|_{T_x}=y''|_{T_x}$. Let $A$ be a countable everywhere dense in $X$ set. We put $S=\bigcup\limits_{a\in A}T_a$. Clearly that $|S|\leq \aleph_0$. For every $y',y''\in Y$ with $y'|_S=y''|_S$ we have $y'|_{T_a}=y''|_{T_a}$. Therefore, $f(a,y')=f(a,y'')$ for every $a\in A$. Taking into account that $f$ is continuous with respect to $x$ and the closure $\overline {A}$ of $A$ coincides with $X$ we obtain that $f(x,y')=f(x,y'')$ for all $x\in X$.
\end{proof}

\begin{theorem}\label{th:3.2} Let $X$ be a topological space with $(II_{\aleph_0})$ and $Y\subseteq {\mathbb R}^T$ be a compact, besides $Y=\overline{B}$, where $B=\{y\in Y: |{\rm supp}\,y|\leq \aleph_0\}$. Then for every separately continuous function $f:X\times Y\to\mathbb R$ depends on $\aleph_0$ coordinates with respect to the second variable.
\end{theorem}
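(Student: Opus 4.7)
The plan is to follow the strategy of Theorem~\ref{th:3.1}, replacing the separable dense subset of $X$ by a pointwise-finiteness argument that invokes $(II_{\aleph_0})$, and replacing Lindel\"offness of $Y$ by uniform continuity of $f(x,\cdot)$ on the compact space $Y$.

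Since $Y\subseteq \mathbb R^T$ is compact and $f(x,\cdot)$ is continuous for each fixed $x\in X$, this function is uniformly continuous on $Y$: for every $n\in\mathbb N$ there exist a finite set $F_{x,n}\subseteq T$ and $\delta_{x,n}>0$ such that $|f(x,y')-f(x,y'')|\le \tfrac{1}{2n}$ whenever $y',y''\in Y$ satisfy $|y'(t)-y''(t)|<\delta_{x,n}$ for every $t\in F_{x,n}$. In particular, if $t\notin F_{x,n}$ and $y',y''\in Y$ agree at every coordinate other than $t$, then $|f(x,y')-f(x,y'')|<1/n$.

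For each $t\in T$ and each $n\in\mathbb N$ I would form the open set
\[
W_{t,n}=\bigcup_{\substack{y',y''\in Y\\ y'(s)=y''(s)\ \forall s\ne t}}\{x\in X:|f(x,y')-f(x,y'')|>1/n\},
\]
which is open by separate continuity of $f$ in the first variable. By the previous paragraph, for each fixed $x$ and $n$ the set $\{t\in T:x\in W_{t,n}\}$ is contained in $F_{x,n}$ and is therefore finite, so $\{W_{t,n}\}_{t\in T}$ is a pointwise finite family of open subsets of $X$. Then $(II_{\aleph_0})$ gives $|\{t\in T:W_{t,n}\ne\emptyset\}|\le\aleph_0$, and $S=\bigcup_{n\in\mathbb N}\{t\in T:W_{t,n}\ne\emptyset\}$ is a countable subset of $T$ with the property that, for every $t\notin S$ and every pair $y',y''\in Y$ differing only at coordinate $t$, $f(x,y')=f(x,y'')$ holds for all $x\in X$.

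The final and most delicate step is to upgrade this single-coordinate invariance to the concentration property $y_1|_S=y_2|_S\Rightarrow f(x,y_1)=f(x,y_2)$. I would first verify it for $y_1,y_2\in B$: the difference set $\Delta=\{t:y_1(t)\ne y_2(t)\}$ is then a countable subset of $T\setminus S$, so by enumerating $\Delta$ one can try to pass from $y_1$ to $y_2$ through a countable chain of single-coordinate modifications, propagating equality of $f(x,\cdot)$-values at each step and invoking continuity of $f(x,\cdot)$ in the limit. The main obstacle, which I expect to be the crux of the argument, is that these intermediate points need not a priori lie in $Y$; handling this is where the Valdivia-type hypothesis $Y=\overline B$ must be exploited, using density of $B$ in $Y$ to realise the intermediate modifications as limits of nets in $Y$. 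Once concentration is established on $B$, the extension to arbitrary $y_1,y_2\in Y$ follows by approximating each $y_i$ by a net in $B$ and using continuity of $f$ in the second variable together with density of $B$ in $Y$.
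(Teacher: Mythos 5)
Your first three steps are sound: the sets $W_{t,n}$ are open, the family $(W_{t,n})_{t\in T}$ is pointwise finite by uniform continuity of $f(x,\cdot)$ on the compact $Y$, and $(II_{\aleph_0})$ makes the set $S$ of relevant coordinates countable. The fatal problem is that the invariant you extract --- invariance of $f(x,\cdot)$ under \emph{single-coordinate} modifications \emph{within} $Y$ --- is simply too weak to imply concentration on $S$, and no amount of care with chains, limits or the density of $B$ can repair this. Concretely: let $X$ be a singleton, let $T$ be uncountable, let $\mathcal{A}$ be an uncountable family of pairwise disjoint countably infinite subsets of $T$, and let $Y=\{\chi_A: A\in\mathcal{A}\}\cup\{0\}\subseteq\{0,1\}^T$. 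This $Y$ is compact and every point has countable support, so $Y=\overline{B}=B$. No two distinct points of $Y$ differ at exactly one coordinate, so every $W_{t,n}$ is empty and your construction yields $S=\emptyset$; yet for a fixed $t_0$ lying in some $A_0\in\mathcal{A}$ the separately continuous function $f(x,y)=y(t_0)$ is not constant in $y$, so it is not concentrated on $\emptyset$. Thus the step you flag as ``the main obstacle'' is not merely delicate: the implication you would need there is false, and the approach has to be abandoned rather than patched.

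The paper avoids this by never reducing to single-coordinate changes. It proves directly, for each $\varepsilon>0$, that some countable $S_\varepsilon$ satisfies $|f(x,b')-f(x,b'')|\le\varepsilon$ for all $x$ and all $b',b''\in B$ with $b'|_{S_\varepsilon}=b''|_{S_\varepsilon}$. Assuming the contrary, a transfinite closing-off argument produces, for $\alpha<\omega_1$, increasing countable sets $S_\alpha$, pairs $b_\alpha,c_\alpha\in B$ agreeing on $S_\alpha$ with supports inside $S_{\alpha+1}$, and open sets $U_\alpha$ on which $|f(\cdot,b_\alpha)-f(\cdot,c_\alpha)|>\varepsilon$; the property $(II_{\aleph_0})$ forces some $x_0$ to lie in infinitely many $U_{\alpha_n}$, and then the finite set of coordinates controlling the oscillation of the continuous function $f(x_0,\cdot)$ on the compact $Y$ would have to meet infinitely many disjoint layers $S_{\alpha_{n+1}}\setminus S_{\alpha_n}$ --- a contradiction. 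Taking $S_0=\bigcup_n S_{1/n}$ and then passing from $B$ to $Y=\overline{B}$ by continuity finishes the proof. If you want to salvage your outline, you must replace the relation ``differ at one coordinate'' by ``agree on a given countable set,'' which is exactly where the transfinite induction becomes unavoidable.
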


\begin{proof} Firstly we prove that for every $\varepsilon >0$ there exists an at most countable set $S_{\varepsilon}\subseteq T$ such that $|f(x,b')-f(x,b'')|\leq \varepsilon$ for every $x\in X$ and $b',b''\in B$ with $b'|_{S_{\varepsilon}}=b''|_{S_{\varepsilon}}$.

Assume the contrary. That is there exists $\varepsilon >0$ such that for every at most countable set $S\subseteq T$ there exist $x\in X$ and $b',b''\in B$ such that $b'|_S=b''|_S$ and $|f(x,b')-f(x,b'')|>\varepsilon$. Using transfinite induction we construct families $(S_{\alpha}:\alpha<{\omega}_1)$ of at most countable sets $S_{\alpha}\subseteq T$, $(b_{\alpha}:\alpha<{\omega}_1)$, $(c_{\alpha}:\alpha<{\omega}_1)$ and $(x_{\alpha}:\alpha<{\omega}_1)$ of points $b_{\alpha},
c_{\alpha}\in B$ and $x_{\alpha}\in X$ such that

$a$)\,\,\,$b_{\alpha}|_{S_{\alpha}}=c_{\alpha}|_{S_{\alpha}}$ for every $\alpha<\omega_1$;

$b$)\,\,\,$S_{\alpha}\subseteq S_{\beta}$ for every $\alpha<\beta<\omega_1$;

$c$)\,\,\,${\rm supp}\,b_{\alpha}\subseteq S_{\alpha+1}$, ${\rm
supp}\,c_{\alpha}\subseteq S_{\alpha+1}$ for every $\alpha<\omega_1$;

$d$)\,\,\,$|f(x_{\alpha}, b_{\alpha})-f(x_{\alpha}, c_{\alpha})|>\varepsilon$ for every $\alpha <\omega_1$.

Take an at most countable set $S_1\subseteq T$. According to the assumption there exist $x_1\in X$ and $b_1, c_1\in B$ such that $b_1|_{S_1}=c_1|_{S_1}$
and $|f(x_1,b_1)-f(x_1,c_1)|>\varepsilon$. Put $S_2=S_1\cup{\rm supp}\,b_1\cup{\rm supp}\, c_1$. Clearly that $|S_2|\leq \aleph_0$. We choose $x_2\in X$ and
$b_2,c_2\in B$ such that $b_2|_{S_2}=c_2|_{S_2}$ and $|f(x_2,b_2)-f(x_2,c_2)|>\varepsilon$.

Suppose that for some $\beta<\omega_1$ the famplies $(S_{\alpha}:\alpha<\beta)$, $(b_{\alpha}:\alpha<\beta)$, $(c_{\alpha}:\alpha<\beta)$ and $(x_{\alpha}:\alpha<\beta)$ are cunstructed. Put $S_{\beta}=\bigcup\limits_{\alpha<\beta}(S_{\alpha}\cup{\rm supp}\,b_{\alpha}\cup{\rm supp}\,c_{\alpha})$. Since for $\alpha<\beta$ all sets $S_{\alpha}$, ${\rm supp}\,b_{\alpha}$ and ${\rm supp}\,c_{\alpha}$ are at most countable, $|S_{\beta}|\leq \aleph_0$. Now using our assumption we choose $x_{\beta}$ and $b_{\beta}, c_{\beta}\in B$ such that $b_{\beta}|_{S_{\beta}}=c_{\beta}|_{S_{\beta}}$ and
$|f(x_{\beta},b_{\beta})-f(x_{\beta},c_{\beta})|>\varepsilon$.

Using the continuity of $f$ with respect to $x$ and the condition $d$) for every $\alpha<\omega_1$ we choose an open neighborhood $U_{\alpha}$ of $x_{\alpha}$ in $X$ such that $|f(x,b_{\alpha})-f(x,c_{\alpha})|>\varepsilon$ for every $x\in U_{\alpha}$. Since $X$ has $(II_{\aleph_0})$, the family $(U_{\alpha}:\alpha<\omega_1)$ is not pointwise finite. Thus, there exists $x_0\in X$ and a strictly increasing sequence $(\alpha_n)^{\infty}_{n=1}$ of at most countable ordinals $\alpha_n$ such that $|f(x_0,b_{\alpha_n})-f(x_0,c_{\alpha_n})|>\varepsilon$ for every $n\in \mathbb N$.

We put $T_n=S_{\alpha_n}$, $v_n=b_{\alpha_n}$ and $w_n=c_{\alpha_n}$ for $n\in \mathbb N$. Using the compactness of $Y$ and the continuity of $f^{x_0}:Y\to \mathbb R$, $f^{x_0}(y)=f(x_0,y)$, we choose a finite set $T_0\subseteq T$ such that $|f(x_0,y')-f(x_0,y'')|<\varepsilon$ for every $y',y''\in Y$ with  $y'|_{T_0}=y''|_{T_0}$. Since $|f(x_0,v_n)-f(x_0,w_n)|>\varepsilon$, $v_n|_{T_0}\ne w_n|_{T_0}$. But according to $a$), $v_n|_{T_n}= w_n|_{T_n}$ and according to $b$) and $c$), the functions $v_n|_{T\setminus T_{n+1}}$ and $w_n|_{T\setminus T_{n+1}}$ are null-functions. Therefore, $v_n|_{T\setminus T_{n+1}}=w_n|_{T\setminus T_{n+1}}$. Thus, $T_0\cap(T_{n+1}\setminus T_n)\ne\O$ for every $n\in \mathbb N$. Taking into account that the sequence $(T_n)^{\infty}_{n=1}$ is increasing, we obtain that the set $T_0$ is infinite. But this contradicts to the choice of $T_0$. Hence, the existence of $S_{\varepsilon}$ is proved.

Put $S_0=\bigcup\limits_{n=1}^{\infty}S_{\frac{1}{n}}$. Clearly that $f(x,b')=f(x,b'')$ for every $x\in X$, $b',b''\in B$ with $b'|_{S_0}=b''|_{S_0}$.
Fix any points $x\in X$, $y',y''\in Y$ such that $y'|_{S_0}=y''|_{S_0}$. Using the continuity of $f^x:Y\to\mathbb R$, $f^x(y)=f(x,y)$, on compact space $Y\subseteq {\mathbb R}^T$ we choose an at most countable set $T_0\subseteq T$ such that $f(x,y_1)=f(x,y_2)$ for every $y_1,y_2\in Y$ with $y_1|_{T_0}=y_2|_{T_0}$. Since $B$ is countably compact set, $Y=\overline{B}$ and the function $f^x:Y\to\mathbb R$, $f^x(y)=f(x,y)$, is continuous, there exist $b',b''\in B$ such that $b'|_{T_0\cup S_0}=y'|_{T_0\cup S_0}$, $f(x,y')=f(x,b')$, $b''|_{T_0\cup S_0}=y''|_{T_0\cup S_0}$ and $f(x,y'')=f(x,b'')$. Then
$f(x,y')=f(x,b')=f(x,b'')=f(x,y'')$. Thus, $f$ concetrated on $S_0$, therefore $f$ depends on at most countable coordinates with respect to the second variable.
\end{proof}

\section{Onepoint discontinuity set of separately continuous functions}

\begin{proposition}\label{p:4.1} Let $X$ be a pseudocompact space, $Y$ be a topological space, $x_0\in X$, $y_0\in Y$, $f:X\times Y\to \mathbb R$ with $D(f)=\{(x_0,y_0)\}$, $\delta>0$ and $U_0$ be a closed neighborhood of $x_0$ in $X$ such that $|f(x,y_0)-f(x_0,y_0)|<\delta$ for every $x\in U_0$, $(U_n)^{\infty}_{n=1}$ and $(V_n)^{\infty}_{n=1}$ be sequences of open nonempty sets $U_n\subseteq U_0$ and $V_n$ in $X$ and $Y$ respectively such that $|f(x,y)-f(x_0,y_0)|>\delta$ for all $(x,y)\in U_n\times V_n$ by $n\in \mathbb N$. Then if $(V_n)^{\infty}_{n=1}$ converges to $y_0$, then $(U_n)^{\infty}_{n=1}$ converges to $x_0$.
\end{proposition}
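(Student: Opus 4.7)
The plan is to proceed by contradiction. Assume $U_n \not\to x_0$; then there is an open neighborhood $W$ of $x_0$ such that, after passing to a subsequence, $U_n \not\subseteq W$ for every $n$. Using complete regularity of $X$, insert a smaller open neighborhood $W_1$ of $x_0$ with $\overline{W_1} \subseteq W$, and set $G_n := U_n \setminus \overline{W_1}$. Because $U_n \not\subseteq W \supseteq \overline{W_1}$, each $G_n$ is a nonempty open subset of $U_0$, and each $G_n$ is disjoint from $W_1$.

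The heart of the argument is to use pseudocompactness of $X$ to produce a cluster point $x^\ast\in X$ of the sequence $(G_n)$, that is, a point every neighborhood of which meets $G_n$ for infinitely many $n$. Since $G_n\subseteq U_0$ and $U_0$ is closed, $x^\ast\in U_0$; and since $W_1$ is a neighborhood of $x_0$ disjoint from every $G_n$, we have $x^\ast\notin W_1$, so $x^\ast\ne x_0$. In particular $(x^\ast,y_0)\notin D(f)$, so $f$ is continuous at $(x^\ast,y_0)$, and by the hypothesis on $U_0$, $\lvert f(x^\ast,y_0)-f(x_0,y_0)\rvert<\delta$.

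To close the argument, fix $\varepsilon>0$ with $\varepsilon<\delta-\lvert f(x^\ast,y_0)-f(x_0,y_0)\rvert$ and, using continuity of $f$ at $(x^\ast,y_0)$, choose open neighborhoods $W^\ast\ni x^\ast$ and $V^\ast\ni y_0$ with $\lvert f(x,y)-f(x^\ast,y_0)\rvert<\varepsilon$ on $W^\ast\times V^\ast$; the triangle inequality then gives $\lvert f(x,y)-f(x_0,y_0)\rvert<\delta$ throughout $W^\ast\times V^\ast$. Since $V_n\to y_0$ we have $V_n\subseteq V^\ast$ for all large $n$, and since $x^\ast$ is a cluster point of $(G_n)$, the neighborhood $W^\ast$ meets $G_n\subseteq U_n$ for some such $n$. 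Choosing $(x,y)\in (W^\ast\cap U_n)\times V_n\subseteq U_n\times V_n$ yields simultaneously $\lvert f(x,y)-f(x_0,y_0)\rvert<\delta$ and $\lvert f(x,y)-f(x_0,y_0)\rvert>\delta$, a contradiction.

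The main obstacle is the pseudocompactness step: one needs the standard fact that in a pseudocompact completely regular space every sequence of nonempty open sets admits a cluster point (equivalently, no such sequence is locally finite). The trick of interposing $\overline{W_1}$ between $x_0$ and $W$ is what ensures the cluster point $x^\ast$ is actually separated from $x_0$, and hence a genuine point of continuity of $f$ at which the hypothesis on $U_0$ may be invoked.
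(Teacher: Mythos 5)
Your proof is correct and follows essentially the same route as the paper's: negate convergence, use pseudocompactness to extract a cluster point $x^\ast$ of the parts of the $U_n$ escaping a closed neighborhood of $x_0$, note that $x^\ast\in U_0$ but $x^\ast\ne x_0$ so $f$ is continuous at $(x^\ast,y_0)$ with $|f(x^\ast,y_0)-f(x_0,y_0)|<\delta$, and contradict the lower bound $\delta$ on $U_n\times V_n$ using $V_n\to y_0$. The only cosmetic difference is that the paper works directly with a closed neighborhood $U$ and derives $|f(\tilde x,y_0)-f(x_0,y_0)|\ge\delta$ by passing to the limit, whereas you interpose $\overline{W_1}\subseteq W$ and exhibit an explicit point where the two inequalities clash.
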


\begin{proof} Let $U$ be a closed neighborhood of $x_0$ in $X$. Suppose that the set $N=\{n\in \mathbb N: U_n\setminus U \ne \O\}$ infinite. Without loss of generality we can propose that $N=\mathbb N$. Put $\tilde{U}_n=U_n\setminus U$ for every $n\in \mathbb N$. Since $X$ pseudocompact, according to [10, p. 311] the family $(\tilde{U}_n:n\in \mathbb N)$ of open nonempty sets $\tilde{U}_n$ is not locally finite in $X$. Therefore there exists $\tilde{x}\in U_0$ such that an arbitrary neighborhood $\tilde{U}$ of $\tilde{x}$ in $X$ intersects with infinite quantity of elements of the family $(\tilde{U}_n:n\in \mathbb N)$. Since $V_n\to y_0$, every neighborhood $W$ of $(\tilde{x},y_0)$ in $X\times Y$ intersects with infinite quantity of elements of the family $(W_n:n\in\mathbb N)$, where $W_n=\tilde{U}_n\times V_n$. Note that $\tilde{x}\ne x_0$. Therefore the function $f$ is continuous at $(\tilde{x},y_0)$. Taking into account that $|f(x,y)-f(x_0,y_0)|>\delta$ for every $(x,y)\in W_n$ by $n\in\mathbb N$, we obtain that $|f(\tilde{x},y_0)-f(x_0,y_0)|\geq \delta$. But this contradicts to $\tilde{x}\in U_0$. Thus, $N$ is finite and $U_n\to x_0$.
\end{proof}

A compact space $Y$ is {\it compact Valdivia}, if $Y$ is homeomorphic to a compact space $Z\subseteq {\mathbb R}^T$ such that the set $\{z\in
Z:|{\rm supp}\,z|\leq \aleph_0\}$ is dense in $Z$.

The following theorem is a main result of this section.

\begin{theorem}\label{th:4.2} Let $X$ be a separable pseudocompact space and $Y$ be a compact space or let $X$ be a pseudocompact space with countable Suslin number and $Y$ be a compact Valdivia, $x_0\in X$, $y_0\in Y$ be nonisolated points in corresponding spaces. Then the following conditions are equivalent:

$(i)$\,\,\,there exist sequences $(U_n)^{\infty}_{n=1}$ and $(V_n)^{\infty}_{n=1}$ of nonempty open sets $U_n\subseteq X$ and $V_n\subseteq Y$ which converges to $x_0$ and $y_0$ respectively;

$(ii)$\,\,\,there exists a separately continuous function $f:X\times Y\to \mathbb R$ with $D(f)=\{(x_0,y_0)\}$.
\end{theorem}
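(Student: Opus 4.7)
The plan is to handle $(i)\Rightarrow(ii)$ by a short appeal to Proposition \ref{p:2.3} and to attack $(ii)\Rightarrow(i)$ by reducing $Y$ to a metrizable compact factor via the dependence-on-$\aleph_0$-coordinates theorems of Section~3, and then lifting the resulting sequences back to $Y$ using compactness.

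For $(i)\Rightarrow(ii)$, Proposition \ref{p:2.3} needs functionally open sequences missing $x_0$ and $y_0$; since $x_0$ and $y_0$ are non-isolated and both spaces are completely regular, I will pick a point of $U_n\setminus\{x_0\}$ (resp.\ $V_n\setminus\{y_0\}$), separate it from $x_0$ (resp.\ $y_0$) by a continuous $[0,1]$-valued function, and replace $U_n$ and $V_n$ by smaller functionally open sets which still converge and miss $x_0$ and $y_0$.

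For $(ii)\Rightarrow(i)$ I will embed $Y\subseteq{\mathbb R}^T$ (any embedding in the first case, the given Valdivia embedding in the second). In the first case, Theorem \ref{th:3.1} applies since $Y$ is Lindeloff and $X$ is separable; in the second, Proposition \ref{p:2.1} gives $X$ the property $(II_{\aleph_0})$ and Theorem \ref{th:3.2} applies. Either way $f$ factors as $f(x,y)=f_0(x,\pi(y))$ with $\pi\colon Y\to Y_0\subseteq{\mathbb R}^S$ the projection onto countably many coordinates; $\pi$ is a perfect surjection onto a metrizable compact $Y_0$, and Proposition \ref{p:2.2} gives $D(f_0)=\{(x_0,y_0')\}$ with $y_0'=\pi(y_0)$, which is non-isolated in $Y_0$ (otherwise separate continuity on the clopen slice $X\times\{y_0'\}$ would make $(x_0,y_0')$ a point of continuity of $f_0$). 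Now choose $\delta>0$ below one-third of $\omega_{f_0}(x_0,y_0')$, a closed neighborhood $U_0$ of $x_0$ on which $|f_0(\cdot,y_0')-f_0(x_0,y_0')|<\delta$, and a decreasing base $(G_n)$ at $y_0'$. Inside each $U_0\times G_n$ take a point of large oscillation and thicken it through the continuity of $f_0$ off the singular point to open $\tilde U_n\subseteq U_0$ and $\tilde V_n'\subseteq G_n\setminus\{y_0'\}$ with $|f_0-f_0(x_0,y_0')|>\delta$ on $\tilde U_n\times \tilde V_n'$. Then $\tilde V_n'\to y_0'$, and Proposition \ref{p:4.1} applied to $f_0$ yields $\tilde U_n\to x_0$.

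The main obstacle, and the remaining step, is to lift $\tilde V_n'\to y_0'$ to a sequence $V_n\to y_0$ in $Y$: the naive candidate $V_n:=\pi^{-1}(\tilde V_n')$ converges to $y_0$ only when the fibre $\pi^{-1}(y_0')$ equals $\{y_0\}$. To handle the general case I will show that these $V_n$ still converge to $y_0$ by covering, for an arbitrary open $G\ni y_0$, the compact set $Y\setminus G$ by open sets eventually disjoint from $V_n$. When $z\in Y\setminus G$ satisfies $\pi(z)\neq y_0'$ this is immediate from $\tilde V_n'\to y_0'$ and the Hausdorffness of $Y_0$. When $z$ lies in the fibre over $y_0'$ but $z\neq y_0$, the point $(x_0,z)$ is a point of continuity of $f$ with $f(x_0,z)=f(x_0,y_0)$, so on some open box $\tilde U\times W(z)$ around $(x_0,z)$ the function $f$ stays within $\delta$ of $f(x_0,y_0)$; since $\tilde U_n\subseteq \tilde U$ eventually and the opposite inequality holds on $\tilde U_n\times V_n$, we conclude $V_n\cap W(z)=\emptyset$ for large $n$. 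A finite subcover of $Y\setminus G$ finishes the lift, yielding $V_n\to y_0$ and, together with $\tilde U_n\to x_0$, the sequences required by $(i)$.
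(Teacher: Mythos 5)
Your proof is correct and, for most of its length, coincides with the paper's. Both directions are handled the same way: Proposition \ref{p:2.3} for $(i)\Rightarrow(ii)$ (your explicit shrinking of the $U_n$, $V_n$ to nonempty functionally open sets missing $x_0$ and $y_0$ fills in a step the paper leaves implicit, since condition $(i)$ here only provides open sets), and, for $(ii)\Rightarrow(i)$, the same reduction via Theorem \ref{th:3.1} (resp.\ Proposition \ref{p:2.1} together with Theorem \ref{th:3.2} in the Valdivia case), the perfect projection onto a metrizable compact quotient, Proposition \ref{p:2.2}, the oscillation argument producing $\tilde U_n$ and $\tilde V_n'$, and Proposition \ref{p:4.1} to conclude $\tilde U_n\to x_0$. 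The one genuine divergence is the final lifting of convergence back to $Y$. The paper simply applies Proposition \ref{p:4.1} a second time with the variables interchanged: the compact $Y$ is pseudocompact, $V_0=\pi^{-1}(W_0)$ is a closed neighborhood of $y_0$ on which $|f(x_0,\cdot)-f(x_0,y_0)|<\delta$, and the already established convergence $U_n\to x_0$ then forces $V_n=\pi^{-1}(\tilde V_n')\to y_0$. You instead prove this convergence directly, covering the compact set $Y\setminus G$ by open sets eventually disjoint from $V_n$ (Hausdorff separation in the quotient off the fibre of $y_0'$, and, on the fibre, joint continuity of $f$ at $(x_0,z)$ combined with $f(x_0,z)=f(x_0,y_0)$). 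Your covering argument is sound --- it essentially re-proves, in this special case, the mechanism behind Proposition \ref{p:4.1} --- so it costs an extra paragraph but makes the lift self-contained and makes explicit why nontrivial fibres of $\pi$ cause no difficulty; the paper's route is shorter but requires noticing that Proposition \ref{p:4.1} can be applied symmetrically because a compact space is pseudocompact.
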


\begin{proof} Since $X$ and $Y$ are completely regular, the implication $(i)\Longrightarrow(ii)$ follows from Proposition \ref{p:2.3}.

$(ii)\Longrightarrow(i)$. Let $X$ be a separable pseudocompact space, $Y$ be a compact and $f:X\times Y\to \mathbb R$ be a separately continuous function with $D(f)=\{(x_0,y_0)\}$. Without loss of generality we can propose that $Y\subseteq {\mathbb R}^T$, where $T$ is a set. According to Theorem \ref{th:3.1}, the function $f$ depends on $\aleph_0$ coordinates with respect to the second variable. Hence there exists an at most countable set $S\subseteq T$ such that  $f(x,y')=f(x,y'')$ for every $x\in X$ and $y',y''\in Y$ with $y'|_S=y''|_S$.

We put $\varphi:Y\to {\mathbb R}^S$, $\varphi(y)=y|_S$, $Z=\varphi(Y)$, $f_0:X\times Z\to\mathbb R$, $f_0(x,\varphi(y))=f(x,y)$, where $y\in Y$. Clearly that $\varphi:Y\to Z$ is perfect. Therefore according to Proposition \ref{p:2.2}, $D(f_0)=\{(x_0,z_0)\}$, where $z_0=\varphi(y_0)$. Moreover, $f_0$ is separately continuous (the continuity with respect to the first variable immediately follows from the continuity of $f$ with respect to the first variable, and the continuity with respect to the second variable can be obtained using Proposition \ref{p:2.2} with the first multiplier $\{x\}$). We choose
$\delta>0$ with $\omega_{f_0}(x_0,z_0)>3\delta$, and choose closed neighborhoods $U_0$ and $W_0$ of $x_0$ and $z_0$ in $X$ and $Z$ respectively such that $|f_0(x,z_0)-f_0(x_0,z_0)|<\delta$ and $|f_0(x_0,z)-f_0(x_0,z_0)|<\delta$ for every $x\in U_0$ and $z\in W_0$.

Note that $Z$ is a metrizable compact. Fix any base $(G_n)^{\infty}_{n=1}$ of open neighborhoods $G_n\subseteq W_0$ of $z_0$ in $Z$ and choose an arbitrary open neighborhood $\tilde{U}$ of $x_0$ in $X$. Since $\omega_{f_0}(\tilde{U}\times G_n)>3\delta$ and $f_0$ is continuous at all points except $(x_0,z_0)$, there exist sequences $(U_n)^{\infty}_{n=1}$ and $(W_n)^{\infty}_{n=1}$ of nonempty open sets $U_n\subseteq \tilde {U}$ and $W_n\subseteq W_0$ in $X$ and $Z$ respectively such that $|f_0(x,z)-f_0(x_0,z_0)|>\delta$ for all $(x,z)\in U_n\times W_n$ by $n\in\mathbb N$. Clearly, that $W_n\to z_0$. Therefore according to Proposition \ref{p:4.1}, $U_n\to x_0$.

Put $V_n={\varphi}^{-1}(W_n)$ by $n=0,1,2,\dots$. Note that $|f(x_0,y)-f(x_0,y_0)|<\delta$ for every $y\in V_0$ and $|f(x,y)-f(x_0,y_0)|>\delta$ for all $(x,y)\in U_n\times V_n$ by $n\in \mathbb N$. Interchanging variables and using Proposition \ref{p:4.1}, we obtain that $V_n\to y_0$.

Now let $X$ is a pseudocompact with countable Suslin number and $Y$ is a compact Valdivia. We can propose that $Y\subseteq {\mathbb R}^T$, besides the set $\{y\in Y:|{\rm supp}\,y|\leq\aleph_0\}$ is dense in $Y$. It follows from [10, p.~311] that $X$ is a Baire space. Therefore Proposition \ref{p:2.1} and Theorem \ref{th:3.2} imply that every separately continuous function $f:X\times Y\to\mathbb R$ depends on $\aleph_0$ coordinates with respect to the second variable. Further we reason analogously as in the previous case.
\end{proof}

\section{Product of compact spaces}

\begin{theorem} \label{th:5.1} Let $X$, $Y$ be a compact spaces and $x_0\in X$, $y_0\in Y$ be a nonisolated points in corresponding spaces. Then the following conditions are equivalent:

$(i)$\,\,\,there exist sequences $(U_n)^{\infty}_{n=1}$ and $(V_n)^{\infty}_{n=1}$ of nonempty functionally open sets $U_n\subseteq X$ and $V_n\subseteq Y$ which converges to $x_0$ and $y_0$ respectively, besides $x_0\not\in U_n$ and $y_0\not\in V_n$ by $n\in \mathbb N$;

$(ii)$\,\,\,there exists a separately continuous function $f:X\times Y\to \mathbb R$ with $D(f)=\{(x_0,y_0)\}$.
\end{theorem}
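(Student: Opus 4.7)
The direction $(i)\Longrightarrow(ii)$ is immediate from Proposition~\ref{p:2.3}. For $(ii)\Longrightarrow(i)$, given a separately continuous $f\colon X\times Y\to\mathbb R$ with $D(f)=\{(x_0,y_0)\}$, the plan is to reduce to Theorem~\ref{th:4.2} applied to carefully chosen separable compact subspaces $X'\subseteq X$ and $Y'\subseteq Y$, and then to lift the resulting functionally open sets back to $X$ and $Y$ through $f$ itself.

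First I would fix $\delta>0$ with $\omega_f(x_0,y_0)>8\delta$ and closed neighborhoods $U_0\ni x_0$, $V_0\ni y_0$ with $|f(x,y_0)-f(x_0,y_0)|<\delta/4$ on $U_0$ and $|f(x_0,y)-f(x_0,y_0)|<\delta/4$ on $V_0$. Triangle inequalities then yield, inside every product subneighborhood of $(x_0,y_0)$ in $U_0\times V_0$, a pair $(x,y)$ with $|f(x,y)-f(x,y_0)|>3\delta$ and $|f(x,y)-f(x_0,y)|>3\delta$. Second, from joint continuity of $f$ at each point $(x,y_0)$ with $x\ne x_0$ and a finite-subcover argument on the compact set $X\setminus W$, I would establish the \emph{continuity-propagation lemma}: for every open $W\ni x_0$ in $X$ and every $\eta>0$ there is an open $V^{\ast}(W,\eta)\ni y_0$ with $|f(x,y)-f(x,y_0)|<\eta$ whenever $x\in X\setminus W$ and $y\in V^{\ast}(W,\eta)$, and the symmetric statement with the roles of $X$ and $Y$ swapped.

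The main step is to construct, by a countable induction, countable sets $A\ni x_0$ in $X$ and $B\ni y_0$ in $Y$ so that for the closures $X'=\overline A$ and $Y'=\overline B$ the restriction $f|_{X'\times Y'}$ has nonzero oscillation at $(x_0,y_0)$. At each stage I would adjoin oscillation witnesses $(x,y)$ with $|f(x,y)-f(x_0,y_0)|>4\delta$ for every cozero neighborhood of $x_0$ and $y_0$ generated from the currently accumulated countable subfamily of $C(X)$ and $C(Y)$, and then enlarge those subfamilies to include the slice functions $f(\cdot,b)$ and $f(a,\cdot)$ of the newly added points. The closures $X'$ and $Y'$ are then compact and separable, with $(x_0,y_0)\in D(f|_{X'\times Y'})$, so Theorem~\ref{th:4.2} applies (with $X'$ separable pseudocompact and $Y'$ compact) and yields sequences of nonempty open $U'_n\subseteq X'$ and $V'_n\subseteq Y'$ with $U'_n\to x_0$, $V'_n\to y_0$, $x_0\notin U'_n$, $y_0\notin V'_n$; by complete regularity these may be shrunk to cozero subsets.

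To finish, I would lift the sequences back to $X$ and $Y$ using $f$. Pick $x_n\in U'_n$ and $y_n\in V'_n$, and set
$$U_n=\{x\in X:|f(x,y_n)-f(x,y_0)|>\delta\},\qquad V_n=\{y\in Y:|f(x_n,y)-f(x_0,y)|>\delta\}.$$
Each $U_n$, $V_n$ is a cozero (hence functionally open) subset of $X$, $Y$, is nonempty (containing $x_n$, resp.\ $y_n$), and avoids $x_0$, resp.\ $y_0$, by the slice estimates at $y_n\in V_0$ and $x_n\in U_0$. The convergence $U_n\to x_0$ follows from the continuity-propagation lemma: given any open $W\ni x_0$ in $X$, the neighborhood $V^{\ast}(W,\delta)$ of $y_0$ eventually contains $y_n$ (since $V'_n\to y_0$ in $Y'$ forces $y_n\to y_0$ in $Y$), and then by construction $U_n\subseteq W$; symmetrically $V_n\to y_0$. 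The main obstacle is the third step, the preservation of oscillation under restriction to countable-closure subspaces: an arbitrary countable reduction may erase the discontinuity, and the self-referential closure procedure, in which each newly added point generates further cozero test-neighborhoods for which further witnesses must be adjoined, is designed precisely to prevent this.
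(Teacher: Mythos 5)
Your plan hinges on the step you yourself flag as ``the main obstacle,'' and that step is not closed by the mechanism you describe. The self-referential procedure adjoins oscillation witnesses only for cozero neighborhoods of $x_0$ and $y_0$ \emph{generated by the countable function families} $\mathcal F\subseteq C(X)$, $\mathcal G\subseteq C(Y)$ accumulated so far. But once you pass to $X'=\overline A$ and $Y'=\overline B$ with the subspace topology, the neighborhoods of $(x_0,y_0)$ in $X'\times Y'$ are traces of \emph{arbitrary} open sets $U\times V$ of $X\times Y$, and these can be strictly finer than anything visible to $\mathcal F\times\mathcal G$; nothing guarantees that such a $U\times V$ still contains one of your witnesses, nor even that $x_0$ remains non-isolated in $\overline A$ (which you need in order to invoke Theorem~\ref{th:4.2} at all). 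Closing this would require showing that $f$ ``depends on countably many coordinates'' near $x_0$ and $y_0$ simultaneously in both variables, which is exactly what the Section~3 machinery provides only under separability or Suslin-number hypotheses on the \emph{other} factor --- hypotheses you do not have for general compacta. A secondary gap: Theorem~\ref{th:4.2}(i), as stated, gives only nonempty open $U'_n\to x_0$, $V'_n\to y_0$ with no control on the values of $f$ there, so your claim that $x_n\in U'_n$, $y_n\in V'_n$ satisfy $|f(x_n,y_n)-f(x_n,y_0)|>\delta$ (hence that your lifted sets $U_n$, $V_n$ are nonempty) does not follow from what you cite; the oscillation estimate of your first paragraph lives on neighborhoods of $(x_0,y_0)$, not on $U'_n\times V'_n$.

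For contrast, the paper does not reduce to Theorem~\ref{th:4.2}. It maps $X$ into $C_p(Y)$ by $x\mapsto f(x,\cdot)$ and observes that on $\tilde X\setminus\{\tilde x_0\}$ the pointwise and uniform topologies coincide (this is precisely your continuity-propagation lemma, used structurally rather than only for the final lifting), so $\tilde X\setminus\{\tilde x_0\}$ and $\tilde Y\setminus\{\tilde y_0\}$ are metrizable; Stone's paracompactness theorem then yields a locally finite cover of $(\tilde X\setminus\{\tilde x_0\})\times(\tilde Y\setminus\{\tilde y_0\})$ by small-oscillation sets, from which infinitely many must accumulate at $(\tilde x_0,\tilde y_0)$, and local finiteness plus compactness forces them to converge there. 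Your continuity-propagation lemma and the final lifting via the cozero sets $\{x:|f(x,y_n)-f(x,y_0)|>\delta\}$ are sound and in the right spirit, but without a correct replacement for the countable reduction the argument does not constitute a proof.
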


\begin{proof} As in the proof of Theorem \ref{th:4.2} it is sufficient to verify the implication $(ii)\Longrightarrow(i)$. Let $f:X\times Y\to \mathbb R$ is a separately continuous function with $D(f)=\{(x_0,y_0)\}$. We consider the continuous mapping $\varphi:X\to C_p(Y)$, $\varphi(x)(y)=f(x,y)$. Put $\tilde{X}=\varphi(X)$, $g:\tilde{X}\times Y\to \mathbb R$, $g(\tilde{x},y)=\tilde{x}(y)=f(x,y)$, where $\tilde{x}=\varphi(x)$. Clearly that $g$ is a separately continuous function. Since $X$ is a compact space, $\varphi$ is a perfect mapping. According to Proposition \ref{p:2.2}, $D(g)=\{(\tilde{x}_0,y_0)\}$, where $\tilde{x}_0=\varphi(x_0)$. Let $\tilde{x}\in A=\tilde{X}\setminus\{\tilde{x}_0\}$. Taking into account that $Y$ is a compact space and $g$ is continuous at every point of the set $\{\tilde{x}\}\times Y$, we obtain that for every $\varepsilon >0$ there exists a neighborhood $\tilde{U}$ of $\tilde{x}$ in $\tilde{X}$
such that $|\tilde{x}(y)-\tilde{u}(y)|<\varepsilon$ for every $y\in Y$ and $\tilde{u}\in \tilde{U}$. Thus, on the set $A$ the topology of pointwise convergence coincides with the normed topology of Banach space $C(X)$. Therefore, $A$ is a metric subspace of $\tilde{X}$.

Now we consider the mapping $\psi:Y\to C_p(\tilde{X})$, $\psi(y)(\tilde{x})=g(\tilde{x},y)$. Put $\tilde{Y}=\psi(Y)$, $h:\tilde{X}\times\tilde{Y}\to \mathbb R$, $h(\tilde{x},\tilde{y})=g(\tilde{x},y)$, where $\tilde{y}=\psi(y)$. Analogously as in the previous reasoning we have $D(h)=\{(\tilde{x}_0,\tilde{y}_0)\}$, where $\tilde{y}_0=\psi(y_0)$, and the set $B=\tilde{Y}\setminus \{\tilde{y}_0\}$ is a metric subspace of $\tilde{Y}$.

Take $\delta>0$ such that $\omega_h(\tilde{x}_0,\tilde{y}_0)> 4\delta$, and choose closed neighborhoods $\tilde{U}_0$ and $\tilde{V}_0$ of $\tilde{x}_0$ and $\tilde{y}_0$ in $\tilde{X}$ and $\tilde{Y}$ respectively such that $|h(\tilde{x},\tilde{y}_0)-h(\tilde{x}_0,\tilde{y}_0)|<\delta$ and $|h(\tilde{x}_0,\tilde{y})-h(\tilde{x}_0,\tilde{y}_0)|<\delta$ for every $\tilde{x}\in \tilde{U}_0$ ³ $\tilde{y}\in \tilde{V}_0$. Put $Z=\tilde{X}\times
\tilde{Y}$, $z_0=(\tilde{x}_0,\tilde{y}_0)$ and $W_0={\rm int}(\tilde{U}_0)\times {\rm int}(\tilde{V}_0)$, where by ${\rm int}(C)$ we denote the interior of $C$ in corresponding space. For every point $z\in A\times B$ we choose an open neighborhood $G_z$ of $z$ in $Z$ such that $z_0\not \in \overline{G}_z$ and $\omega_h(G_z)< \delta$. Since $A\times B$ is metrizable subspace of $Z$, according to Stone's Theorem on paracompactness of metrizable space [10, p.414], from the open cover $(G_z: z\in A\times B)$ of space $A\times B$ we can choose an locally finite open subcover $(W_i:i\in I)$. Put $J=\{i\in I: W_i\cap
W_0\ne \O \,\,\,\mbox{and}\,\,\,|h(z)-h(z_0)|>2\delta\,\,\,\mbox{for\,\,\,some}\,\,\,z\in W_i\}$. Since $\omega_h(z_0)>4\delta$, for every neighborhood $W\subseteq W_0$ of $z_0$ there exists $z\in W$ such that $|h(z)-h(z_0)|>2\delta$, besides according to the choice $\tilde{U}_0$ and $\tilde{V}_0$ the point $z$ belongs to the set $A\times B$. Therefore $z_0\in \overline{\bigcup\limits_{i\in J}W_i}$. Taking into account that $z_0\not \in \overline{W}_i$ for every $i\in I$ we obtain that the set $J$ is infinite. Moreover, note that since $\omega_h(W_i)<\delta$ for $i\in I$, $|h(z)-h(z_0)|>\delta$ for every $j\in J$ and $z\in W_j$. Since $|h(z)-h(z_0)|<\delta$ for every $z\in ((\{\tilde{x}_0\}\times \tilde{V}_0)\cup(\tilde{U}_0\times\{\tilde{y}_0\}))\setminus \{z_0\} = C$ and the function $h$ is continuous at every point of set $C$, $\overline{\bigcup\limits_{i\in J}W_i}\bigcap C=\O$.

We choose an arbitrary countable set $\{j_1, j_2,\dots\}\subseteq J$ and put $\tilde{W}_n=W_{j_n}\cap W_0$ for $n\in \mathbb N$. It follows from the definition of set $J$ that all sets $\tilde{W}_n$ are nonempty. Note that the family $(\tilde{W}_n:n\in \mathbb N)$ is locally finite at every point of set $(A\times B)\cup(Z\setminus (\tilde{U}_0\times \tilde{V}_0))\cup C=Z\setminus \{z_0\}$. Let $W$ be an arbitrary closed neighborhood of $z_0$ in $Z$. Then the family $(\tilde{W}_n\setminus W: n\in \mathbb N)$ is a locally finite family of open sets in the compact $Z$. Therefore $\tilde{W}_n\setminus W\ne \O$ only for finite quantity of integers $n$. Hence, there exists $n_0\in \mathbb N$ such that $\tilde{W}_n\subseteq W$ for all $n\geq n_0$. Thus, $\tilde{W}_n\to z_0$.

For every $n\in \mathbb N$ we choose open sets $\tilde{U}_n$ and $\tilde{V}_n$ in $\tilde{X}$ and $\tilde{Y}$ respectively such that $\tilde{U}_n\times\tilde{V}_n\subseteq\tilde{W}_n$. Clearly that $\tilde{U}_n\to\tilde{x}_0$ and $\tilde{V}_n\to\tilde{y}_0$. For $n=0,1,2,\dots$
we put $U_n=\varphi^{-1}(\tilde{U}_n)$ and $V_n=\psi^{-1}(\tilde{V}_n)$. The sets $U_0$ and $V_0$ are closed, and sets $U_n$ and $V_n$ for $n\in\mathbb N$ are functionally open in $X$ and $Y$ respectively. Now using Proposition \ref{p:4.1} for the function $g$, we obtain that the convergence $\tilde{U}_n\to\tilde{x}_0$ implies the convergence $V_n\to y_0$. Using similar arguments to the function $f$, we obtain that $U_n\to x_0$.
\end{proof}

\bibliographystyle{amsplain}

\end{document}